\newcounter{theoremcounter}
\newcounter{lemmacounter}
\newcounter{dummycounter}
\newcounter{quescounter}
\newcounter{emptycounter}
\newcounter{defcounter}
\newcounter{probcounter}
\newtheorem{theorem}[theoremcounter]{Theorem}
\newtheorem{question}[quescounter]{Question}
\newtheorem{lemma}[lemmacounter]{Lemma}
\newtheorem{remark}{Remark}
\newtheorem{definition}[defcounter]{Definition}
\newtheorem{problem}[probcounter]{Problem}
\newcounter{eqncounter}
\numberwithin{equation}{eqncounter}
\def\IC{\mathbb C}
\def\IZ{\mathbb Z}
\def\IN{\mathbb N}
\def\IP{\mathbb P}
\def\IQ{\mathbb Q}
\def\p{\mathfrak{p}}
\def\q{\mathfrak{q}}
\def\ga{\gamma}
\renewcommand{\vec}[1]{\mbox{\boldmath$#1$}}
\def\Oseen{{\mathcal{O}}}
\def\A{{\mathfrak{A}}}
\def\C{{\mathfrak{C}}}
\def\D{{\mathfrak{D}}}
\def\B{{\mathfrak{B}}}
\let\rho\varrho
\def\ta_0{\tau}
\def\g0{\ta_0\sigma(\C_0^{-1}\D)}
\def\G0{\tau\Lambda(\D)}
\def\Qbar{\overline{\IQ}}
\def\vz{\vec{z}}
\def\v0{\vec{0}}
\def\d_1{\kappa}
\def\Gal{\mathop{{\rm Gal}}\nolimits}
\def\tors{\mathop{{\rm tors}}\nolimits}
\def\Fix{\mathop{{\rm Fix}}\nolimits}
\title{On the Northcott property for infinite extensions}
\author{Martin Widmer}
\begin{document}

\subjclass[2020]{Primary 11R04; 11G50   Secondary 11R06, 11R20; 37P30}

\keywords{Weil height, Northcott property, Property (N), Northcott's Theorem, abelian extensions, Silverman's inequality}

\begin{abstract}
We start with a brief survey on the Northcott property for subfields of the algebraic numbers $\Qbar$. 
Then we introduce a new criterion for its validity (refining the author's previous criterion), 
addressing a problem of Bombieri. We  show that Bombieri and Zannier's theorem, stating that the maximal abelian extension
of a number field $K$ contained in $K^{(d)}$ has the Northcott property, follows 
very easily from this refined criterion.  Here $K^{(d)}$ denotes the composite field of all extensions of $K$ of degree
at most $d$.
%We give a short and simple proof of Bombieri and Zannier's theorem that the maximal abelian extension
%of a number field $K$ contained in $K^{(d)}$ (the composite field of all extensions of $K$ of degree
%at most $d$) has the Northcott property. Our proof is based on a refinement of a simple criterion
%from 2011 that uses a height lower bound of Silverman. 
\end{abstract}

\maketitle

\section{Introduction}

%Let $H(\cdot)$ be the absolute logarithmic Weil height on the algebraic numbers $\Qbar$.  
Heights are an important tool in Diophantine geometry to study the distribution of algebraic points on algebraic  varieties, and  
in arithmetic dynamics  to study  preperiodic points under  endomorphisms of algebraic varieties.
There are various different heights but  the most standard one is probably the Weil height on $\IP^n$.
However, there common fundamental property is that there are only finitely many points of bounded height
over a given number field. To which fields of infinite degree does this finiteness property extend?
This is the question we are concerned with in this article.

All algebraic field extensions of $\IQ$  are considered subfields of some fixed algebraic closure $\Qbar$.
Let $K$ be a number field, and for $P=(\alpha_0:\cdots:\alpha_n)\in \IP^n(K)$, with representative $(\alpha_0,\ldots,\alpha_n)\in K^{n+1}$, let 
\begin{alignat*}1
H(P)=\prod_{v\in M_K}\max\{|\alpha_0|_v,\ldots,|\alpha_n|_v\}^{\frac{d_v}{[K:\IQ]}}
\end{alignat*}
be the absolute multiplicative Weil height of $P$. Here $M_K$ denotes the set of places of $K$.  
For each place $v$ we choose the unique representative $|\cdot|_v$ that either extends the usual Archimedean absolute value
on $\IQ$ or a usual $p$-adic absolute value on $\IQ$, and $d_v=[K_v:\IQ_v]$ denotes the local degree at $v$. 
A standard reference for heights is \cite{BG}. We use $\IN=\{1,2,3,\ldots\}$  for the set of positive natural numbers.

The unique prime factorisation of $\IZ$ implies that $\prod_{M_\IQ}|\alpha|^{d_v}_v=1$ for every non-zero 
$\alpha\in \IQ$. This identity is known as the product formula  and extends 
to arbitrary number fields $K$ \cite[Proposition 1.4.4]{BG}. Consequently,  the value of the height is independent of the representative $(\alpha_0,\ldots, \alpha_n)$
and thus defines a genuine function on $\IP^n(K)$. Choosing a representative of $P$ with a coordinate equal to $1$  shows that $H(P)\geq 1$. 
The fundamental identity $\sum_{M_K}d_v=[K:\IQ]$, valid for every number field (cf.  \cite[Corollary 1.3.2]{BG}), shows that the  
height $H(P)$ is also independent from the number field $K$ containing the coordinates of $P$.
Hence,  $H(\cdot)$ is a well-defined  function on  $\IP^n(\Qbar)$. 
D. G. Northcott \cite[Theorem 1]{Northcott50} proved the following simple but  important result.
\begin{theorem}[Northcott, 1950]\label{thm: 0}
Given a number field $K$, $n\in \IN$,  and $X\geq 1$, there are only a finite number of points $P$ in $\IP^n(K)$ such that $H(P)\leq X$.
\end{theorem}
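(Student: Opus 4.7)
The plan is to reduce to $\IP^d(\IQ)$, where $d=[K:\IQ]$, in two stages: first from $\IP^n(K)$ to $K$ by comparing each coordinate with the full tuple, and then from $K$ to $\IP^d(\IQ)$ via the characteristic polynomial of multiplication by $\alpha$. Finiteness over $\IP^d(\IQ)$ is then settled by unique factorisation in $\IZ$.

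For the first reduction, I would rescale a representative of $P=(\alpha_0:\cdots:\alpha_n)$ so that some coordinate equals $1$. For every other coordinate $\alpha_k$ and every place $v$ of $K$ one has $\max(1,|\alpha_k|_v)\leq \max_i|\alpha_i|_v$ trivially; taking the product of the $(d_v/[K:\IQ])$-th powers over $v\in M_K$ gives $H(\alpha_k)\leq H(P)\leq X$. It therefore suffices to show that $\{\alpha\in K:H(\alpha)\leq X\}$ is finite.

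For the second reduction, I would attach to each $\alpha\in K$ the characteristic polynomial $f_\alpha(T)=\prod_{i=1}^d(T-\sigma_i(\alpha))=T^d+c_{d-1}T^{d-1}+\cdots+c_0\in\IQ[T]$, where $\sigma_1,\ldots,\sigma_d$ are the embeddings $K\hookrightarrow\Qbar$, and form $Q_\alpha=(c_0:\cdots:c_{d-1}:1)\in\IP^d(\IQ)$. Two facts are needed: Galois-invariance $H(\sigma_i(\alpha))=H(\alpha)$, which follows because each $\sigma_i$ permutes the places of $\Qbar$ compatibly with their normalised absolute values, and a Gelfand/Mahler place-wise inequality
\[
\max_j|c_j|_v\leq \varepsilon_v\prod_i\max(1,|\sigma_i(\alpha)|_v),
\]
with $\varepsilon_v=2^d$ at archimedean $v$ (from the triangle inequality applied to Vieta's formulas) and $\varepsilon_v=1$ otherwise (ultrametric). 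Globalising via the product formula and the identity $\sum_v d_v=[K:\IQ]$ yields $H(Q_\alpha)\leq 2^d H(\alpha)^d\leq 2^d X^d$. Since $\alpha\mapsto Q_\alpha$ is at most $d$-to-one (as $\alpha$ is a root of $f_\alpha$), it remains to establish finiteness in $\IP^d(\IQ)$.

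For the base case, any $Q\in\IP^d(\IQ)$ admits, via unique factorisation in $\IZ$, a representative $(b_0,\ldots,b_d)\in\IZ^{d+1}$ with $\gcd(b_0,\ldots,b_d)=1$; then $\max_j|b_j|_v=1$ at every finite place, and $H(Q)=\max_j|b_j|$ at the unique archimedean place. A height bound therefore forces each $|b_j|$ into a bounded set of integers, leaving finitely many coprime tuples. The main obstacle is the Gelfand/Mahler place-wise inequality in the second step, and in particular the careful local-global bookkeeping that turns per-place bounds with an archimedean constant $2^d$ into a clean global constant; the rest is essentially triangle inequality, ultrametric, and unique factorisation.
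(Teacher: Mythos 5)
Your argument is correct, and it is the standard textbook proof (essentially \cite[Theorem 1.6.8]{BG} combined with the reduction to $n=1$); the paper itself does not reprove Theorem \ref{thm: 0} but simply cites Northcott, and separately records the same first reduction you use, namely $H(P)\geq\max_i H((1:\alpha_i))$ after normalising a coordinate to $1$. All three steps check out: the coordinate-wise comparison at each place, the elementary symmetric function bound $|c_j|_v\leq\varepsilon_v\prod_i\max(1,|\sigma_i(\alpha)|_v)$ with $\varepsilon_v\leq 2^d$ archimedean and $1$ ultrametric, and the coprime integer representative in $\IP^d(\IQ)$. The only bookkeeping point worth making explicit is that the place-wise inequality involves all conjugates $\sigma_1(\alpha),\ldots,\sigma_d(\alpha)$ simultaneously, so it must be read over the places $w$ of the Galois closure $N$ of $K/\IQ$ (or any field containing all conjugates), with the normalisation $\sum_{w\mid\infty}[N_w:\IQ_w]=[N:\IQ]$ taken there; since $H$ is independent of the ambient number field and $H(\sigma_i(\alpha))=H(\alpha)$, globalising then gives $H(Q_\alpha)\leq 2^dH(\alpha)^d$ exactly as you claim, and the fibres of $\alpha\mapsto Q_\alpha$ have size at most $d$. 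With that understood, the proof is complete.
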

%Let $P=(\alpha_0:\cdots:\alpha_n)\in \IP^n(\Qbar)$ with a coordinate $\alpha_j\neq 0$.
%Then we obviously have  $H(P)\geq \max_{i} H((\alpha_j:\alpha_i))$.
%Consequently, Theorem \ref{thm: 0} holds true for a given field $K\subseteq \Qbar$ if and only if 
%it holds for  $n=1$.\\

For $P=(1:\alpha_1:\cdots:\alpha_n)\in \IP^n(\Qbar)$ we obviously 
have  $H(P)\geq \max_i H((1:\alpha_i))$.
Consequently, Theorem \ref{thm: 0} holds true for a given field $K\subseteq \Qbar$ if and only if 
it holds for  $n=1$. We define the height $H(\alpha)$ of an algebraic number $\alpha$
to be $H((1:\alpha))$, and so we are led to the following notion,
formally introduced in 2001 by Bombieri and Zannier \cite{BoZa}.
\begin{definition}[Northcott property]\label{def: N}
A subset  $S$ of $\Qbar$ has the \emph{Northcott property} (or shorter: \emph{Property (N)})
if $$\{\alpha\in{S}; H(\alpha)\leq X\}$$ is finite for every $X\geq 1$. 
\end{definition}
Theorem \ref{thm: 0} was merely an intermediate step in Northcott's seminal work \cite{Northcott50} from 1950 to show that
for any morphism $f: \IP^n\to \IP^n$ of algebraic degree at least $2$ and defined over a number field $K$ 
there are only finitely many preperiodic points in $\IP^n(K)$ under $f$. His proof also shows that one can replace number field
by any field with Property (N).

Another somewhat surprising application of Property (N) builds on work of J. Robinson from 1962.
It has been observed by Vidaux and Videla
\cite{ViVi16} that  her work \cite{Robinson1962} implies the undecidability of each
ring of  totally real algebraic integers with Property (N). This connection was further exploited in \cite{Martinez2020} and
in \cite{Springer2020}.

These two applications extend interesting properties of number fields to fields with Property (N),  suggesting that Property (N) fields
behave similarly as number fields. However, this view was shattered by Fehm's discovery \cite[Proposition 1.2]{Fehm}  that some fields with Property (N) 
are pseudo algebraically closed (PAC). 

 Next we discuss  two arithmetic properties with respect to which \emph{all} fields of infinite degree with Property (N)  behave radically different from
number fields. 

Gaudron and R\'emond \cite{GaudronRemondSiegel} introduced the notion of a Siegel field, which is a subfield of $\Qbar$
over which Siegel's Lemma holds true (cf. \cite[$(\ast)$ on p.189]{GaudronRemondSiegel}). It is classical that number fields
are Siegel fields, and work of Zhang \cite{PosLineBundVar}, and independently  of Roy and Thunder \cite{79}, shows that $\Qbar$ is also a Siegel field.
A priori it is not easy to find counterexamples but Gaudron and R\'emond \cite[Corollaire 1.2]{GaudronRemondSiegel} proved that a field of
infinite degree with Property (N) cannot be a Siegel field.

A very recent paper of Daans, Kala and Man  \cite{DaansKalaMan} investigates the existence of universal quadratic forms over totally real fields of infinite degree.
Whereas it is well-known that for totally real number fields such a form always exists, the existence of a universal quadratic  form
over a given  totally real field of infinite degree is not clear at all.
However, they prove \cite[Theorem 1.2]{DaansKalaMan} that such a form cannot  exists if the field  has infinite degree and Property (N).\\

A point $P=(\alpha_0:\cdots:\alpha_n)\in \IP^n(\Qbar)$ defines a number field $\IQ(\alpha_i/\alpha_j; \alpha_j\neq 0)$, 
and the degree of $P$ is the degree of this number field.
To prove Theorem \ref{thm: 0} Northcott proved the stronger result \cite[Lemma 2]{Northcott50} that for any given $d\in \IN$ and $X\geq 1$ there are only  finitely many points $P\in \IP^n(\Qbar)$ of degree $d$ and height $H(P)$ at most $X$. The latter is a direct consequence
%by showing that the set of algebraic numbers of degree at most $d$ has the Northcott property. 
%And this statement in turn  is what nowadays is usually understood as ``Northcott's Theorem'' (cf. \cite[Theorem 1.6.8]{BG}).
of what  nowadays is usually understood as ``Northcott's Theorem'' (cf. \cite[Theorem 1.6.8]{BG}).
\begin{theorem}[Northcott's Theorem]\label{thm: 3}
Let  $d\in \IN$, then the set $\{\alpha\in \Qbar; [\IQ(\alpha):\IQ]\leq d\}$ has Property (N).
\end{theorem}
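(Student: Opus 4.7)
The plan is to reduce the claim to the finiteness of a set of integer polynomials. Fix $d \in \IN$ and $X \geq 1$. For $\alpha \in \Qbar$ with $n := [\IQ(\alpha):\IQ] \leq d$ and $H(\alpha) \leq X$, let $p(T) = a_n T^n + \cdots + a_0 \in \IZ[T]$ be the primitive minimal polynomial of $\alpha$, and let $\alpha = \alpha_1, \alpha_2, \ldots, \alpha_n$ be the Galois conjugates of $\alpha$. It suffices to show that the set of such polynomials $p$ is finite, since each $p$ has at most $d$ roots.

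The crux is the standard identity relating the Weil height to the Mahler measure:
\[
H(\alpha)^n = M(p) := |a_n| \prod_{i=1}^n \max\{1, |\alpha_i|\}.
\]
One derives this by setting $K = \IQ(\alpha)$ and splitting the product defining $H(\alpha)^n = \prod_{v \in M_K} \max\{1, |\alpha|_v\}^{d_v}$ into archimedean and non-archimedean contributions: the archimedean factors reassemble to $\prod_i \max\{1, |\alpha_i|\}$ because summing the local degrees over archimedean places corresponds to running through the complex embeddings, while the non-archimedean factors collapse to $|a_n|$ via Gauss's lemma applied to the primitive polynomial $p$ (see \cite[Proposition 1.6.6]{BG}). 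Granted this, I would invoke Mahler's inequality $|a_k| \leq \binom{n}{k} M(p)$, which follows from writing $a_k/a_n$ as an elementary symmetric polynomial in the $\alpha_i$ and bounding term-by-term. Combined with $M(p) = H(\alpha)^n \leq X^d$ and $\binom{n}{k} \leq 2^d$, this yields the uniform bound $|a_k| \leq 2^d X^d$ for every coefficient.

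Since the $a_k$ are integers of bounded absolute value and there are at most $d+1$ of them, only finitely many polynomials $p$ arise, and therefore only finitely many $\alpha$. The one substantive step is the height-Mahler identity; after that the argument is genuinely elementary, amounting to counting integer lattice points in a box. Conceptually, the theorem asserts that the Weil height of an algebraic number of bounded degree controls its arithmetic complexity — the size of its primitive minimal polynomial over $\IZ$ — and the product formula is precisely what makes this packaging work.
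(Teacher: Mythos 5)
Your proof is correct: the height--Mahler measure identity $H(\alpha)^n=M(p)$, Mahler's coefficient bound $|a_k|\leq\binom{n}{k}M(p)$, and the finiteness of integer points in a box together give exactly the standard argument. The paper itself offers no proof of this theorem, citing \cite[Theorem 1.6.8]{BG} instead, and your argument is essentially the one given there, so there is nothing to add.
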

%It is worthwhile to point out that 
Northcott's Theorem  already  implies the existence of fields of infinite degree with Property (N).
Indeed, let $K$ be a number field and let $X\geq 1$ be given.
Any two distinct quadratic extensions of $K$ only intersect in $K$, and there are infinitely many such extensions. Hence,  there must be one 
whose elements outside of $K$ all have height bigger than $X$. Constructing an infinite tower $\IQ=K_0\subset K_1\subset K_2\subset \cdots$
where we choose a quadratic extension $K_{i+1}$ of $K_i$ whose elements outside of $K_i$ all have height larger than $i$ say, yields an
infinite extension $L=\cup_i K_i$ with Property (N).

Dvornicich and Zannier \cite{DvZa} observed that Northcott's Theorem remains true when replacing the ground field $\IQ$ by any field with Northcott property, i.e., 
if $L$ is a field with Property (N) and $d\in \IN$, then the set 
\begin{alignat*}1
\{\alpha\in \Qbar; [L(\alpha):L]\leq d\}
\end{alignat*}
also has Property (N). In particular, Property (N) is preserved under finite field extensions. However, it is not always preserved under
taking Galois closure over $\IQ$, or taking compositum of two fields (cf.  \cite[Theorem 5 ]{WidmerPropN}).\\

Bombieri and Zannier \cite{BoZa} were the first\footnote{It is worthwhile mentioning that Julia Robinson \cite{Robinson1962}  in 1962 proved that the ring of integers of $\IQ(\sqrt{n}; n\in \IN)$ has the ``Northcott property'' with respect to the house (instead of Weil height), and deduced from this that  $\IN$ is first order definable in this ring.} authors that studied the Northcott property for infinite field extensions of $\IQ$.
In view of Northcott's Theorem it is very appealing to consider the field $\IQ^{(d)}$ generated over $\IQ$ by all algebraic numbers of degree at most $d$.
Bombieri and Zannier \cite{BoZa} raised the following question.
\begin{question}[Bombieri and Zannier,  2001]\label{quest: 1}
Let $d\in \IN$. Does $\IQ^{(d)}$ have Property (N)?
\end{question}
%It should be mentioned that 
There is a  whole zoo of properties  for subfields of $\Qbar$ (including the properties $(P), (SP), (\overline{P}), (R), (\overline{R}), (K)$, see \cite{PolyMapp, Liardet71}; and $(SB), (USB)$, see \cite{FiliMiner, Pottmeyer}) in arithmetic dynamics, that are all implied by Property (N) (cf. \cite{CheccoliWidmer, Pottmeyer}).
For some of these properties the analogue of  Question \ref{quest: 1} was posed, explicitly\footnote{Narkiewicz \cite[Problem 10 (i)]{unsolvedproblems,Narkiewicz1963} conjectured that $K^{(d)}$ has (P) for all $d$. Further, for various pairs of  these properties it was asked whether they are equivalent to each other, cf. \cite{PolyMapp, CheccoliWidmer}}
or implicitly.
We will  not discuss any of these more exotic properties but let us mention that 
Pottmeyer \cite[Theorem 4.3]{Pottmeyer} showed that $\IQ^{(d)}$ has the properties (USB) and (P) (solving a conjecture of Narkiewicz from 1963). However, (USB) and (P) are 
both strictly weaker than (N), as shown in \cite[Proposition 1.3]{Fehm} and in  \cite[Theorem 3.3]{DvZa}  respectively.\\

Question \ref{quest: 1} is still open 
but a remarkable step was already made in \cite{BoZa}.
For $d\in \IN$ and  $K$ a number field
we write $K^{(d)}$ for the composite field of all extensions of $K$  of degree at most $d$. Then $K^{(d)}/K$ is a Galois extension,
generated over $K$ by all algebraic numbers of relative degree $[K(\alpha):K]$ at most $d$.
Let $K^{(d)}_{ab}$ be the composite field of all abelian extensions $F/K$ with $F\subset K^{(d)}$.
Then $K^{(d)}_{ab}$  is the maximal abelian subextension of $K^{(d)}/K$. 
If $d\geq 2$ then $\IQ(\sqrt{n}; n\in \IZ)\subset K^{(d)}_{ab}\subset K^{(d)}$, and so $K^{(d)}_{ab}$ and $K^{(d)}$ both have infinite degree over $\IQ$, and thus also over $K$.

\begin{theorem}[Bombieri, Zannier 2001]\label{thm: 1}
Let $K$ be a number field and let $d\in \IN$.
The field  $K^{(d)}_{ab}$ has the Northcott property. In particular, $K^{(2)}$  has the Northcott property.
\end{theorem}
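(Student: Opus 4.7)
The plan is to deduce Theorem \ref{thm: 1} directly from the refined Northcott criterion introduced earlier in the paper. That criterion certifies Property (N) for $L=\bigcup_i L_i$ once one controls the growth of the relative discriminants $\mathfrak d_{L_{i+1}/L_i}$ along an exhausting tower of number fields. The task is therefore to produce such a tower for $K^{(d)}_{ab}$ and to verify the criterion on it.

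The key structural input is the observation that every finite subextension $F/K$ of $K^{(d)}_{ab}/K$ has $\Gal(F/K)$ of exponent dividing $N:=\operatorname{lcm}(1,2,\dots,d)$. Indeed, $F\subseteq K^{(d)}$ is generated over $K$ by finitely many elements $\alpha_j$ of degree at most $d$ over $K$; each subfield $K(\alpha_j)$ is Galois over $K$ (as a subextension of the abelian extension $F/K$) and has degree at most $d$, so $\Gal(K(\alpha_j)/K)$ has exponent dividing $N$; the diagonal embedding $\Gal(F/K)\hookrightarrow\prod_j \Gal(K(\alpha_j)/K)$ then forces $\Gal(F/K)$ itself to have exponent dividing $N$. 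Consequently $K^{(d)}_{ab}$ lies in the maximal abelian extension of $K$ of exponent dividing $N$.

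Next, exhaust $K^{(d)}_{ab}$ by an ascending tower $K=L_0\subsetneq L_1\subsetneq\cdots$ of finite abelian extensions of $K$ of exponent dividing $N$. By the conductor-discriminant formula together with standard local bounds on higher ramification groups for abelian local extensions of exponent at most $N$, the local contribution to $\log|\mathfrak d_{L_{i+1}/L_i}|$ at a prime $\p$ of $L_i$ is $O_N([L_{i+1}:L_i]\log N(\p))$, with the implied constant depending only on $N$ and the residue characteristic of $\p$. Constructing the tower so that each step $L_i\to L_{i+1}$ introduces new ramification at primes of $K$ of sufficiently large norm---while still exhausting $K^{(d)}_{ab}$---yields relative discriminants satisfying the quantitative growth condition required by the refined criterion, and Property (N) follows. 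Since $K^{(2)}=K^{(2)}_{ab}$ (all quadratic extensions being abelian), the second assertion is immediate.

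The main obstacle I foresee lies in the tower construction: one must \emph{simultaneously} exhaust $K^{(d)}_{ab}$ and satisfy the discriminant-growth hypothesis. This is precisely where the refinement over the author's earlier criterion should bear fruit, by localising the hypothesis to a per-step comparison of relative discriminant against relative degree, reducing the verification to transparent class-field-theoretic facts about abelian extensions of bounded exponent. This likely explains why the author asserts that Bombieri and Zannier's theorem follows ``very easily'' from the refined criterion.
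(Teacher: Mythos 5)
Your proposal misreads the refined criterion and, as a result, never reaches the one idea that makes the deduction work. Theorem \ref{prop:1} is not a tower criterion: it requires $\ga(M/K)\to\infty$ as $M$ runs over \emph{all} number fields between $K$ and $L$, and the whole refinement over Theorem \ref{thm: 2} is the supremum over auxiliary ground fields $F\supseteq K$ inside the definition (\ref{def: ga}). Your plan --- exhaust $K^{(d)}_{ab}$ by a tower $L_0\subsetneq L_1\subsetneq\cdots$ and control $D_{L_{i+1}/L_i}$ step by step --- is essentially an attempt to apply the \emph{older} criterion, which the paper explicitly says does not seem strong enough here. Concretely, the tower route fails for two reasons. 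First, your discriminant estimate points the wrong way: the criterion needs a \emph{lower} bound on relative discriminants, whereas the conductor--discriminant bound $O_N([L_{i+1}:L_i]\log N(\p))$ is an upper bound and certifies nothing. Second, the normalisation $1/([M:K_0][M:K_{i-1}])$ is fatal: an intermediate field $M$ that is (tamely) ramified over $K_{i-1}$ only above a single large prime $p$ has $N_{K_{i-1}/\IQ}(D_{M/K_{i-1}})\leq p^{[M:\IQ]}$, so the relevant quantity is at most $p^{[K_{i-1}:\IQ]/[M:K_0]}$, which stays bounded once $[M:K_{i-1}]$ is of size $\log p$; and one cannot ``arrange the exhaustion'' to avoid such $M$, since every finite subextension of $K^{(d)}_{ab}$ must eventually appear below some $L_i$.

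The actual argument runs as follows. For an intermediate number field $M$, one first shows (Lemma \ref{lem: basic}) that the largest ramified prime $p_M$ tends to infinity with $M$; this uses that an abelian $M/K$ of exponent dividing $d!$ is a compositum of extensions of $K$ of degree at most $d!$ (your justification of the exponent bound is itself flawed: a finite subextension $F$ of $K^{(d)}$ need not be generated by elements of degree $\leq d$ over $K$, and the generators $\alpha_j$ of the ambient compositum need not lie in $F$, so one must pass through Galois closures; the correct bound is exponent dividing $d!$), together with Hermite's theorem. For $p_M>d$ unramified in $K$, the ramification above $\p\mid p_M$ is tame with index $e\mid d!$, and --- this is the key step --- one takes $F=T$, the common fixed field of the inertia groups $I(\B_i/\p)$ (they coincide because $M/K$ is abelian). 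Then $[M:T]=e$ is \emph{bounded} in terms of $d$, and $(\p_1\cdots\p_g)^{e-1}\mid D_{M/T}$ gives the lower bound $N_{T/\IQ}(D_{M/T})\geq p_M^{[M:K]/2}$, whence $\ga(M/K)\geq p_M^{1/(2e[K:\IQ])}\to\infty$. Without the freedom to choose $F=T$ --- a field that depends on $M$ and on the ramified prime, and belongs to no preassigned tower --- the bounded factor $[MF:F]=e$ in the exponent is unavailable and the estimate collapses. Your final remark that $K^{(2)}=K^{(2)}_{ab}$ is correct.
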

Taking $K=\IQ(\zeta_d)$ for a primitive $d$-the root of unity, and applying Theorem \ref{thm: 1} proves that  the field 
\begin{alignat}1\label{cor: BoZa}
\IQ(1^{1/d}, 2^{1/d}, 3^{1/d}, 4^{1/d},\ldots)
\end{alignat}
has the Northcott property. 

Theorem \ref{thm: 1} is  a very interesting result for its own sake but it also has interesting applications. 
Specifically, to list  some of the recent applications, Theorem \ref{thm: 1} was used:
\begin{itemize}
\item   in \cite{ViVi16} to show that the maximal totally real subfield of $K^{(d)}_{ab}$ is undecidable,
in \cite{Springer2020} to show that  $\IQ^{(d)}_{ab}$ is undecidable,
and in \cite{Martinez2020} as one of the ingredients that led the authors conjecture that $K^{(d)}$ is undecidable (proved for $\IQ^{(2)}$
in the same paper).
\item  in \cite{DaansKalaMan}  to deduce that if $L$ is a totally real subfield of $K^{(d)}_{ab}$ of infinite degree, then
no universal quadratic form exists over $L$. In particular, this holds if  $L\subset \IQ^{[d]}$ and $d$ is a prime or a prime square,
where $\IQ^{[d]}$ denotes the
compositum of all totally real Galois fields of degree exactly $d$ over $\IQ$.
\item in \cite[Corollary 1]{CheccoliDill2023} to prove that if $K$ is a number field,  $A$ is an abelian variety defined over $K$, and 
$K(A_{\tors})$ is  the minimal field extension of $K$ over which all torsion points of $A$ are defined, then 
each subfield of $K(A_{\tors})$ which is Galois over $K$, and whose Galois group has finite exponent, has the Northcott property.
\end{itemize}

An abelian extension $L/\IQ$ lies in $\IQ^{(d)}$ for some $d$ if and only if its Galois group has finite exponent (cf. \cite[Theorem 1]{Checcoli}). 
As pointed out in  \cite[Section 5]{CheccoliDill2023}  this remains true when replacing the ground field $\IQ$ with an arbitrary number field $K$.
Therefore Theorem \ref{thm: 1} gives a purely Galois theoretic criterion for the Northcott property of a field, i.e., every abelian extension
of a number field $K$ with finite exponent has the Northcott property.

However, the restriction to abelian extensions (and finite exponent) in Theorem \ref{thm: 1} is very rigid and rules out many interesting
examples. 
In the survey article \cite[p. 52]{BomSur} Bombieri states:
\emph{``It remains an open problem to determine whether the Northcott property
holds for $K^{(d)}$ if $d\geq 3$ and, more generally, to determine workable
conditions for its validity.''}
In this paper we are particularly concerned with the second part of Bombieri's statement.

\begin{problem}[Bombieri,  2009]\label{prob: 1}
Determine  workable conditions for the validity of the Northcott property for subfields of $\Qbar$.
\end{problem}

In 2011 the author \cite{WidmerPropN} gave a criterion which is robust and often easy to apply.
For an extension $M/K$ of number fields we write  $D_{M/K}$ for the relative discriminant,
and we write $N_{K/F}(\cdot)$ for the norm from $K$ to $F$. If $F=\IQ$ and $\A$ is a non-zero  ideal in the ring of integers $\Oseen_K$
of $K$ then we interpret $N_{K/F}(\A)$ as the unique positive rational integer that generates the principle ideal $N_{K/F}(\A)$.
\begin{theorem}[{\cite[Theorem 3]{WidmerPropN}}]\label{thm: 2}
Let $K$ be a number field, let $K=K_0\subsetneq K_1\subsetneq K_2\subsetneq....$ be a nested sequence of finite extensions and set $L=\bigcup_{i}K_i$. Suppose that
\begin{alignat}1\label{reldiscond0}
\inf_{K_{i-1}\subsetneq M \subset K_i}\left(N_{K_{i-1}/\IQ}(D_{M/K_{i-1}})\right)^{\frac{1}{[M:K_0][M:K_{i-1}]}}\longrightarrow \infty
\end{alignat}
as $i$ tends to infinity where the infimum is taken over all intermediate fields $M$ strictly larger than $K_{i-1}$.
Then  the field $L$ has the Northcott property.
\end{theorem}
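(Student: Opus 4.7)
The plan is to show that for every $X \geq 1$, the set $S_X := \{\alpha \in L : H(\alpha) \leq X\}$ is contained in some $K_N$ with $N = N(X)$; applying Theorem \ref{thm: 0} to the number field $K_N$ then gives $|S_X| < \infty$. Writing $h := \log H$ for the logarithmic Weil height, this is equivalent to showing that under the hypothesis (\ref{reldiscond0}) the heights of elements of $K_i \setminus K_{i-1}$ tend uniformly to infinity with $i$.

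The main input is a relative version of Silverman's classical height lower bound (cf.\ \cite[\S1.6]{BG}), obtained by applying the product formula to the discriminant of the minimal polynomial of $\alpha$ over $K_{i-1}$. For $\alpha$ generating $M$ over $K_{i-1}$ of relative degree $n := [M:K_{i-1}] \geq 2$ it has the shape
\begin{equation*}
h(\alpha) \;\geq\; \frac{\log N_{K_{i-1}/\IQ}(D_{M/K_{i-1}})}{2\,[K_{i-1}:\IQ]\,n(n-1)} \;-\; \frac{\log n}{2(n-1)}.
\end{equation*}
Using the tower identity $[K_{i-1}:\IQ]\,n = [M:K_0]\,[K_0:\IQ]$ together with the trivial bound $n - 1 \leq n = [M:K_{i-1}]$, the denominator on the right is at most $2[K_0:\IQ]\,[M:K_0]\,[M:K_{i-1}]$; the correction $\log n/(2(n-1))$ is universally bounded by $(\log 2)/2$ for $n \geq 2$. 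The inequality therefore rearranges to
\begin{equation*}
h(\alpha) \;\geq\; \frac{1}{2[K_0:\IQ]}\,\log\!\left(N_{K_{i-1}/\IQ}(D_{M/K_{i-1}})^{\frac{1}{[M:K_0][M:K_{i-1}]}}\right) - \frac{\log 2}{2},
\end{equation*}
which manifestly involves the quantity controlled by the hypothesis (\ref{reldiscond0}).

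Given $\alpha \in S_X \setminus K_0$, let $i = i(\alpha)$ denote the minimal index with $\alpha \in K_i$ and set $M = K_{i-1}(\alpha)$; minimality guarantees $K_{i-1} \subsetneq M \subset K_i$, so $M$ is one of the fields over which the infimum in (\ref{reldiscond0}) is taken. Applying the previous display to this $M$ shows that $h(\alpha)$ is bounded below by $(2[K_0:\IQ])^{-1}$ times the logarithm of the infimum appearing in (\ref{reldiscond0}) at level $i$, minus the absolute constant $(\log 2)/2$. By hypothesis this lower bound tends to infinity with $i$, so there exists $N = N(X) \in \IN$ such that $h(\alpha) > \log X$ whenever $i(\alpha) > N$. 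Hence $S_X \subseteq K_N$, and Theorem \ref{thm: 0} completes the proof.

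The main difficulty is really bookkeeping: verifying that the at-first-sight unusual exponent $1/([M:K_0][M:K_{i-1}])$ appearing in (\ref{reldiscond0}) aligns \emph{exactly}, up to the harmless multiplicative constant $[K_0:\IQ]$ and a bounded additive constant, with the shape produced by the relative Silverman inequality. No Galois-theoretic or class-field-theoretic input enters; the argument is a single height estimate combined with the tower structure, which is presumably why the criterion is robust enough to subsume Theorem \ref{thm: 1} once slight refinements are made.
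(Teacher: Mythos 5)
Your proof is correct and follows essentially the same route the paper indicates: take $F=K_{i-1}$ for the minimal $i$ with $\alpha\in K_i$, apply Silverman's relative discriminant lower bound to $M=K_{i-1}(\alpha)$, and observe that the exponent $2[K_{i-1}:\IQ]\,n(n-1)\le 2[K_0:\IQ][M:K_0][M:K_{i-1}]$ turns the hypothesis (\ref{reldiscond0}) into a diverging height lower bound, forcing $S_X\subseteq K_N$ and reducing to Northcott's Theorem. The bookkeeping with the tower identity and the bounded correction term $\log n/(2(n-1))\le(\log 2)/2$ is all accurate.
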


Theorem \ref{thm: 2} implies the following refinement of (\ref{cor: BoZa}). 
Let $K$ be a number field,
let $p_1<p_2<p_3<...$ be a sequence of positive primes
and let $d_1,d_2,d_3,...$ be a sequence of positive integers.
Then  the field 
$$K(p_1^{1/d_1},p_2^{1/d_2},p_3^{1/d_3},...)$$ 
has the Northcott property if and only if
$\log p_i/d_i\longrightarrow \infty$ as $i$ tends to infinity.
The fact that every direct product of finite solvable groups can be realised over $\IQ$ by a Galois extension with Property (N)
can also easily be deduced from Theorem \ref{thm: 2}  (cf. \cite[Theorem 4]{CheccoliWidmer}).
Fehm's aforementioned construction of PAC fields with Property (N) also used   Theorem \ref{thm: 2}.
And finally, Theorem \ref{thm: 2} allows to construct fairly large non-abelian subfields of $\IQ^{(d)}$ with Property (N)
(cf. \cite[Corollaries 3, 4, and 5]{WidmerPropN}), providing another result on Question \ref{quest: 1}.

Theorem \ref{thm: 2} is based on a  fundamental height lower bound of Silverman \cite[Theorem 2]{Silverman}.
Here we give only a simplified version sufficient for our purposes.
Let $\alpha\in \Qbar$, let $F$  be a number field, let $K=F(\alpha)$, $m=[F:\IQ]$, and $d=[K:F]$. Then
\begin{alignat}1\label{ineq: Sil}
 H(\alpha)\geq \frac{1}{2}N_{F/\IQ}(D_{K/F})^{\frac{1}{2md^2}}.
\end{alignat}
Using the optimal choice of $F$ for given $\alpha$ to maximise the right hand-side in (\ref{ineq: Sil}) plays an important role in our results.
For the convenience of the reader we  will give a proof of inequality (\ref{ineq: Sil}) in Section \ref{sec: Sil}.

%How does one prove Theorem \ref{thm: 2}?
%Given $X\geq 1$, inequality (\ref{ineq: Sil})  tells us that all elements of $L$ of height at most $X$ must lie in some fixed number field $K_i$ (of course $i$ depends on $X$),
%and thus, by Northcott's Theorem, the field $L$ has the Northcott property. 

Obviously Theorem \ref{thm: 2} does not follow from Theorem \ref{thm: 1}. 
How does one prove Theorem \ref{thm: 2}? Let $\alpha\in L$ be of height at most $X$, and let $K_{i_0}$ be the maximal
field not containing $\alpha$.
Applying (\ref{ineq: Sil}) with $F=K_{i_0}$, and using (\ref{reldiscond0}), shows that  $i_0$ is bounded from above in terms of $X$ and $L$,
and thus, by Northcott's Theorem, the field $L$ has the Northcott property. 

However, the choice $K_{i_0}$ for the ground field $F$ can be far from optimal, and so we do not use the full force of (\ref{ineq: Sil}). 
Therefore, Theorem \ref{thm: 2} does not seem strong enough to deduce Theorem \ref{thm: 1} either.\\

The aim of this short note is to provide a  refined criterion, using the full force of (\ref{ineq: Sil}),
that easily implies Theorem \ref{thm: 2} and Theorem \ref{thm: 1}.
To this end we introduce the 
 following invariant for an extension of number fields $M/K$:
\begin{alignat}1\label{def: ga}
\ga(M/K)=\sup_{K\subset F} \left(N_{F/\IQ}(D_{MF/F})\right)^{\frac{1}{[MF:\IQ][MF:F]}},
\end{alignat}
where the supremum runs over all number fields $F$ containing $K$, and $MF$ denotes the composite field of $M$ and $F$.
We can now state a more powerful version of the criterion given in  Theorem \ref{thm: 2}.

\begin{theorem}\label{prop:1}
Let $K$  be a number field,  and let $L$ be an infinite algebraic field extension of $K$. Suppose that
\begin{alignat*}1
\liminf_{K\subset M\subset L}\ga(M/K)=\infty,
\end{alignat*}
where    $M$ runs over all number fields in $L$ containing $K$. Then $L$ has the Northcott property.
\end {theorem}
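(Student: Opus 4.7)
The plan is to derive, for every $\alpha \in L$, the inequality $H(\alpha) \geq \tfrac{1}{2}\ga(K(\alpha)/K)^{1/2}$ by applying Silverman's inequality (\ref{ineq: Sil}) at every admissible choice of the auxiliary ground field $F$, and then to combine this bound with Theorem \ref{thm: 0} and the liminf hypothesis to conclude finiteness.

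Fix $\alpha \in L$ and set $M = K(\alpha)$. For any number field $F$ with $K \subset F \subset \Qbar$, Silverman's inequality applied to the pair $F \subset F(\alpha) = MF$ reads
\begin{alignat*}1
H(\alpha) \geq \frac{1}{2} N_{F/\IQ}(D_{MF/F})^{\frac{1}{2[MF:\IQ][MF:F]}},
\end{alignat*}
since $2[F:\IQ][F(\alpha):F]^2 = 2[MF:\IQ][MF:F]$. As the square root is monotone, taking the supremum over all $F \supset K$ matches the definition (\ref{def: ga}) of $\ga(M/K)$ and yields
\begin{alignat*}1
H(\alpha) \geq \frac{1}{2}\ga(K(\alpha)/K)^{1/2}.
\end{alignat*}
This is the quantitative heart of the argument: in contrast to the proof of Theorem \ref{thm: 2}, the auxiliary field $F$ is now optimised rather than committed to a single canonical choice, which is precisely what allows us to use the full strength of (\ref{ineq: Sil}).

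To finish, fix $X \geq 1$ and consider the set $\{\alpha \in L : H(\alpha) \leq X\}$. The inequality just derived forces $\ga(K(\alpha)/K) \leq 4X^2$ for every such $\alpha$. Reading the liminf assumption in the natural way, namely that for every $C > 0$ only finitely many number fields $M$ with $K \subset M \subset L$ satisfy $\ga(M/K) \leq C$, we see that only finitely many distinct number fields $K(\alpha)$ can arise among such $\alpha$. For each of these finitely many fields $M$, Theorem \ref{thm: 0} provides only finitely many elements $\alpha \in M$ with $H(\alpha) \leq X$, so the original set is a finite union of finite sets, hence finite. The main obstacle is purely bookkeeping: verifying that the exponents in (\ref{ineq: Sil}) and in (\ref{def: ga}) are arranged so that the supremum of Silverman's lower bound is exactly $\tfrac12 \ga(M/K)^{1/2}$; once this is in hand, the rest of the argument is immediate.
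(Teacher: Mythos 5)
Your proof is correct and follows essentially the same route as the paper: both arguments rest on the observation that Silverman's inequality, optimised over the auxiliary field $F\supset K$, gives $4H(\alpha)^2\geq \ga(K(\alpha)/K)$, after which the liminf hypothesis leaves only finitely many possible fields $K(\alpha)$ and Northcott's Theorem finishes. The only cosmetic difference is that the paper argues by contradiction along a sequence of distinct elements of bounded height, whereas you argue directly; the exponent bookkeeping you carry out is exactly right.
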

\begin{proof}
Suppose that $L$ does not have the Northcott property. Thus there exists $X\geq 1$ and
a sequence $(\alpha_i)_i$ of pairwise distinct elements in $L$ with $H(\alpha_i)\leq X$ for all $i$. By Northcott's Theorem the degrees of $M_i=K(\alpha_i)$ must tend to infinity. After passing to a subsequence we can assume all the $M_i$
are distinct. Note that $M_iF=F(\alpha_i)$ for each $F$ that contains $K$.
We apply inequality (\ref{ineq: Sil}) to get 
\begin{alignat*}1
4X^2&\geq \liminf_i (2H(\alpha_i))^2\geq \liminf_i \left(\sup_{K\subset F} N_{F/\IQ}(D_{M_iF/F})^{\frac{1}{[M_iF:\IQ][M_iF:F]}}\right)\\
&\geq \liminf_{K\subset M\subset L}\left(\sup_{K\subset F} N_{F/\IQ}(D_{MF/F})^{\frac{1}{[MF:\IQ][MF:F]}}\right)=\liminf_{K\subset M\subset L}\ga(M/K).
\end{alignat*}
\end{proof}

Theorem \ref{prop:1} implies\footnote{Let (M_j) be a sequence of distinct fields with $K\subset M_j\subset L$ and $\gamma(M_j/K)<X$, and let $i=i(j)$ be minimal with $M_j\subset K_i$. Set $M'_j=K_{i-1}M_j$ so that $K_{i-1} \subsetneq M'_j\subset K_i$.
The choice  $F=K_{i-1}$ on the right-hand side of (\ref{def: ga}) shows that (1.2) has a bounded subsequence.} Theorem \ref{thm: 2}, but why does it also imply Theorem \ref{thm: 1}, and how does this proof differ from the original one 
in \cite{BoZa}? We will discuss these questions  in detail in Section \ref{sec: proof}.\\

Are there any known criteria for Property (N) for field extensions of infinite degree that we have not mentioned so far?
%besides Theorem \ref{thm: 1} and  Theorem \ref{thm: 2}?
The author is only aware of one such criterion.
%In \cite{BoZa} the authors also proved a second criterion besides Theorem \ref{thm: 1}. 
Let $L/\IQ$ be a Galois extension and let $S(L)$ be the set of rational
primes for which $L$ can be embedded in a finite extension of $\IQ_p$. For $p\in S(L)$ let $e_p$ and $f_p$ be the ramification index and the 
inertia degree above $p$. Bombieri and Zannier \cite[Theorem 2]{BoZa} proved that 
\begin{alignat}1\label{crit: 3}
\liminf_{\alpha\in L}H(\alpha)\geq \exp\left(\frac{1}{2}\sum_{p\in S(L)}\frac{\log p}{e_p(p^{f_p}+1)}\right).
\end{alignat}
In particular, $L$ has the Northcott property whenever the sum on the right hand-side of (\ref{crit: 3}) diverges.
The above criterion does not seem very workable.  Bombieri and Zannier  asked whether
this sum can diverge for infinite extensions but considered this unlikely. However, it was shown by Checcoli and Fehm \cite{CheccoliFehm} in 2021
that there are Galois extensions $L/\IQ$ of infinite degree for which the above sum diverges, and even such extensions
for which neither Theorem \ref{thm: 1} nor Theorem \ref{thm: 2} applies, so it constitutes an independent criterion for the Northcott property,
albeit one for which natural examples still need to be found.

\section{Silverman's inequality}\label{sec: Sil}
In this section we give a proof of Silverman's inequality (\ref{ineq: Sil}). 
For the special case $F=\IQ$ a very simple proof was given by Roy and Thunder \cite[Lemma 1 and 2]{8}. 
We extend the argument in  \cite{8} to arbitrary ground fields $F$, providing a slightly different  proof from Silverman's original one in \cite{Silverman}.
Yet another proof of Silverman's inequality was given by Ellenberg and Venkatesh \cite[Lemma 2.2]{EllVentorclass}.

We first fix the notation and recall some basic facts. Let $F$ be a number field of degree $m$,
let $K/F$ be a field extension  of degree $d$, and let 
$\sigma_1,\ldots,\sigma_d: K\to K^{(G)}$ be the $d$ distinct  field homomorphisms of $K$ to  the Galois closure $K^{(G)}$ of $K/F$, fixing $F$.
Let $(z_1,\ldots,z_d)$ be a $d$-tuple of elements in $K$. Then $D_{K/F}(z_1,\ldots,z_d)=\det[\sigma_i(z_j)]^2$, and for a non-zero ideal $\A$ in $\Oseen_K$
the discriminant  $D_{K/F}(\A)$ is the ideal in $\Oseen_F$ generated by the numbers $D_{K/F}(z_1,\ldots,z_d)$
 as the tuples $(z_1,\ldots,z_d)$ run over all $F$-bases of $K$ and each basis element is contained in $\A$. In particular, $D_{K/F}(\A)$ divides
the principle ideal in $\Oseen_F$ generated by $D_{K/F}(z_1,\ldots,z_d)$  for each such tuple $(z_1,\ldots,z_d)$ (see \cite[III, \S 3]{13}).
Recall that we write $D_{K/F}$ for $D_{K/F}(\Oseen_K)$. We will use the basic identity (cf. \cite[III, \S 3, Proposition 13]{12})
\begin{alignat}1\label{eq: disc}
D_{K/F}(\A)=D_{K/F}N_{K/F}(\A)^2.
\end{alignat}
\begin{lemma}[Silverman, 1984]\label{lem: Silverman}
Let $F$  be a number field of degree $m$.  Let $\alpha\in \Qbar\backslash F$,  set $K=F(\alpha)$, and $d=[K:F]$. Then
\begin{alignat*}1
 H(\alpha)\geq d^{-\frac{1}{2(d-1)}}N_{F/\IQ}(D_{K/F})^{\frac{1}{2md(d-1)}}.
 \end{alignat*}
\end {lemma}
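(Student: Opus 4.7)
The plan is to adapt the elementary proof of Roy and Thunder~\cite{8} (written for $F = \IQ$) to the general ground field $F$. The starting point is the $F$-power basis $1, \alpha, \ldots, \alpha^{d-1}$ of $K$ and its discriminant, the squared Vandermonde
\begin{equation*}
\Delta := D_{K/F}(1,\alpha,\ldots,\alpha^{d-1}) = \prod_{1 \leq i < j \leq d}(\sigma_i\alpha - \sigma_j\alpha)^2 \in F.
\end{equation*}
The idea is to bound $|N_{F/\IQ}(\Delta)|$ via Hadamard at the Archimedean places, to absorb the missing non-Archimedean contribution into the norm of a well-chosen fractional ideal, and then to invoke the product formula to reassemble everything into $H(\alpha)^{2md(d-1)}$.

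First I would introduce the fractional $\Oseen_K$-ideal
$\A := \Oseen_K + \alpha\Oseen_K + \cdots + \alpha^{d-1}\Oseen_K$,
which contains the $F$-basis $1, \alpha, \ldots, \alpha^{d-1}$. The definition of $D_{K/F}(\A)$ combined with (\ref{eq: disc}) then yields the fractional-ideal inclusion $(\Delta) \subseteq D_{K/F} \cdot N_{K/F}(\A)^2$ in $F$; taking absolute norms to $\IQ$ gives
\begin{equation*}
|N_{F/\IQ}(\Delta)| \geq N_{F/\IQ}(D_{K/F}) \cdot N_{K/\IQ}(\A)^2.
\end{equation*}

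Next, for each complex embedding $\sigma$ of $F$, Hadamard's inequality applied to the $d \times d$ matrix $[\tilde\sigma_i(\alpha)^{j-1}]$, where $\tilde\sigma_1,\ldots,\tilde\sigma_d$ extend $\sigma$ to $K$, yields $|\sigma(\Delta)| \leq d^d \prod_i \max(1,|\tilde\sigma_i\alpha|)^{2(d-1)}$. Multiplying over $\sigma$ and re-indexing by the Archimedean places of $K$ gives $|N_{F/\IQ}(\Delta)| \leq d^{md} \prod_{v \mid \infty} \max(1,|\alpha|_v)^{2(d-1)d_v}$. Complementarily, a direct valuation calculation shows $\mathrm{ord}_v(\A) = (d-1)\min(0, \mathrm{ord}_v(\alpha))$ at every finite place $v$ of $K$, which with the standard normalizations translates into $N_{K/\IQ}(\A)^{-2} = \prod_{v \nmid \infty} \max(1,|\alpha|_v)^{2(d-1)d_v}$.

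Combining these three estimates and invoking $\prod_v \max(1,|\alpha|_v)^{d_v} = H(\alpha)^{md}$ collapses the Archimedean and non-Archimedean local factors into a single global height, yielding $|N_{F/\IQ}(D_{K/F})| \leq d^{md} H(\alpha)^{2md(d-1)}$, from which the claim follows by extracting the $(2md(d-1))$-th root. The main technical point is the extension of (\ref{eq: disc}) to the fractional ideal $\A$ (the excerpt states it for integral ideals only), which is handled by clearing denominators in $\Oseen_F$ and reducing to the integral case; beyond this, the proof is a matter of carefully matching local contributions at each place.
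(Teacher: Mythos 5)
Your proposal is correct and follows essentially the same route as the paper's proof, which is likewise the Roy--Thunder argument extended to the ground field $F$: Hadamard's inequality at the Archimedean places, the norm of the ideal generated by the basis at the finite places, and the identity $D_{K/F}(\A)=D_{K/F}N_{K/F}(\A)^2$ to reassemble everything. The only difference is cosmetic: the paper clears denominators first, working with the integral basis $z_j=\omega_0^{d-j}\omega_1^{j-1}$ and the integral ideal it generates and bounding $H(P)^{2md}$ from below, whereas you keep the power basis $1,\alpha,\ldots,\alpha^{d-1}$ and its fractional ideal and bound $|N_{F/\IQ}(\Delta)|$ from above, which forces the (routine) extension of (\ref{eq: disc}) to fractional ideals that you correctly flag.
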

\begin{proof}
Choose $\omega_0,\omega_1\in \Oseen_K$ such that $\omega_0\neq 0$ and $\alpha=\omega_1/\omega_0$. 
For $1\leq j\leq d$ let $z_j=\omega_0^{d-j}\omega_1^{j-1}$,  so that $P=(1:\alpha:\cdots:\alpha^{d-1})=(z_1:\cdots: z_d)\in \IP^{d-1}(K)$
and $H(\alpha)^{d-1}=H(P)$. We will bound
$$H(P)^{2md}=\prod_{v\nmid \infty}\max_j\{|z_j|_v\}^{2d_v}\prod_{v\mid \infty}\max_j\{|z_j|_v\}^{2d_v}$$
from below.
Note that $z_1,\ldots,z_d$ is an integral $F$-basis of $K$.
Let $\A=\sum_j z_j\Oseen_K$ be the ideal in $\Oseen_K$   generated by  the $z_j$. 
For the non-Archimedean places of $K$ we have
\begin{alignat*}1%\label{eq: non-arch}
\prod_{v\nmid \infty}\max_j\{|z_j|_v\}^{2d_v}=N_{K/\IQ}(\A)^{-2}. 
\end{alignat*}
For each  embedding $\tau:F \to \IC$ 
we choose an extension $\tilde{\tau}:K^{(G)}\to \IC$ of $\tau$ to  $K^{(G)}$.
Then  the $d$ distinct maps $\tilde{\tau}\circ\sigma_i:K\to \IC$  are precisely the $d$ embeddings of $K$ that extend $\tau$. 
Ranging over all embeddings $\tau$ of $F$ gives the full set of embeddings of $K$.
Hence, for  the Archimedean places  of $K$ we get  
\begin{alignat*}1%\label{eq: arch}
\prod_{v\mid \infty}\max_j\{|z_j|_v\}^{2d_v}=\prod_{\tau}\prod_{i=1}^d\max\{|\tilde{\tau}\circ\sigma_i (z_1)|,\ldots, |\tilde{\tau}\circ\sigma_i (z_d)|\}^2.
 \end{alignat*}
Writing $\vz_{\tau,i}$ for the complex row vector $(\tilde{\tau}\circ\sigma_i (z_1),\ldots, \tilde{\tau}\circ\sigma_i (z_d))$, and applying Hadamard's inequality 
yields
\begin{alignat*}1
\prod_{i=1}^d\max\{|\tilde{\tau}\circ\sigma_i (z_1)|,\ldots, |\tilde{\tau}\circ\sigma_i (z_d)|\}^2&\geq d^{-d}\prod_{i=1}^d |\vz_{\tau,i}|^2\geq d^{-d}|\det[\tilde{\tau}\circ\sigma_i (z_j)]^2|
=d^{-d} |\tilde{\tau}(\det[\sigma_i (z_j)]^2)|=d^{-d} |\tau(\det[\sigma_i (z_j)]^2)|,
 \end{alignat*}
 where in the last step we used that $\det[\sigma_i (z_j)]^2=D_{K/F}(z_1,\ldots,z_d)$ lies in $F$.
Taking the product over all $\tau$, and using that  $D_{K/F}(\A)$ divides the ideal generated by $\det[\sigma_i (z_j)]^2$ in $\Oseen_F$, yields
\begin{alignat*}1
\prod_{v\mid \infty}\max_j\{|z_j|_v\}^{2 d_v}\geq d^{-md} N_{F/\IQ}(D_{K/F}(\A)).
 \end{alignat*}
Now we use  (\ref{eq: disc}), and that $N_{F/\IQ}(D_{K/F} N_{K/F}(\A))^2)=N_{F/\IQ}(D_{K/F}) N_{K/\IQ}(\A)^2$ to get
 \begin{alignat*}1
H(\alpha)^{2md(d-1)}=H(P)^{2md}\geq d^{-md} N_{F/\IQ}(D_{K/F}),
 \end{alignat*}
which proves the claim.
\end{proof}

\section{Theorem \ref{prop:1} implies Bombieri and Zannier's Theorem \ref{thm: 1}}\label{sec: proof}
In this section we show that Theorem \ref{prop:1} gives a short and straightforward proof of Theorem \ref{thm: 1}.
We also compare this new proof with the original one from \cite{BoZa}. 
Both proofs have a common part which we extract and formulate below as a separate lemma.
\begin{lemma}[Bombieri and Zannier \cite{BoZa}]\label{lem: basic}
Let $d\in \IN$, let $K$ be a number field,
and let $M$ be a number field with $K\subset M\subset K^{(d)}_{ab}$. 
Then $p_M$, the largest prime that ramifies in $M$,
tends to infinity as $M$ runs over all such intermediate fields $M$.
Further, if $p>d$ is prime and  $\B$ is a prime ideal in $\Oseen_M$ above $p$ and $\p=\B\cap K$,
then   the ramification index $e(\B/\p)$ divides $d!$.
\end{lemma}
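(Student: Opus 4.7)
The plan is to reduce both statements to the local picture at a prime $\mathfrak{p}$ of $K$, exploiting the fact that any finite $M\subset K^{(d)}_{ab}$ is generated over $K$ by elements of degree at most $d$ (since $M\subset K^{(d)}$). Write $M=K(\alpha_1,\ldots,\alpha_r)$ with $[K(\alpha_i):K]\leq d$, and fix a prime $\mathfrak{P}$ of $\overline{K}$ extending $\mathfrak{B}$; this identifies the completion $M_\mathfrak{B}$ with $K_\mathfrak{p}(\alpha_1,\ldots,\alpha_r)$, the compositum of the local extensions $K_\mathfrak{p}(\alpha_i)/K_\mathfrak{p}$, each of degree at most $d$ and ramification index $e_i\leq d$.

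For the second claim, the hypothesis $p>d$ gives $p\nmid e_i$, so every $K_\mathfrak{p}(\alpha_i)/K_\mathfrak{p}$ is tamely ramified. Any tamely ramified local extension of ramification $e$ is contained in $K_\mathfrak{p}^{\mathrm{ur}}(\pi^{1/e})$ for a uniformiser $\pi$, so the compositum satisfies $M_\mathfrak{B}\subset K_\mathfrak{p}^{\mathrm{ur}}(\pi^{1/e})$ with $e=\mathrm{lcm}(e_1,\ldots,e_r)$. Consequently $e(\mathfrak{B}/\mathfrak{p})$ divides $\mathrm{lcm}(1,2,\ldots,d)$, which divides $d!$.

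For the first claim I would argue by contradiction: suppose infinitely many admissible $M$ satisfy $p_M\leq C$ for some fixed $C$. Then all are ramified only at the finitely many primes of $K$ above the rational primes $p\leq C$. The crucial local input is that there are only finitely many extensions of $K_\mathfrak{p}$ of degree at most $d$ (Krasner's lemma), so their compositum $K_\mathfrak{p}^{(d)}$ is itself a \emph{finite} extension of $K_\mathfrak{p}$; in particular $M_\mathfrak{B}\subset K_\mathfrak{p}^{(d)}$ has local degree uniformly bounded in $M$. Standard conductor--discriminant estimates then bound the conductor exponent of $M/K$ at $\mathfrak{p}$ uniformly in $M$, so the conductor of $M/K$ divides some fixed ideal $\mathfrak{m}=\mathfrak{m}(C,d,K)$. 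Since $M/K$ is abelian, class field theory (the finiteness of the ray class group $\mathrm{Cl}_\mathfrak{m}(K)$) produces only finitely many such $M$, contradicting the assumption.

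The main obstacle I anticipate is in the first claim at the wildly ramified primes $p\leq d$, where the clean tame argument of the second claim breaks down; the Krasner-based finiteness of $[K_\mathfrak{p}^{(d)}:K_\mathfrak{p}]$ sidesteps this but displaces the work into turning a bounded local degree into a bounded conductor exponent, which should follow from standard local-field formulas. The global-to-local passage—choosing compatible embeddings $\overline{K}\hookrightarrow\overline{K}_\mathfrak{p}$—is routine. The abelian hypothesis on $M/K$ is used only at the class field theory step; the second claim does not require it.
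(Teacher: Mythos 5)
Your argument is correct in substance but takes a genuinely different route from the paper's, so let me first flag one misstatement and then compare.

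The premise that ``any finite $M\subset K^{(d)}_{ab}$ is generated over $K$ by elements of degree at most $d$'' is false as stated: by the definition of $K^{(d)}$, such an $M$ is merely \emph{contained in} some $N=K(\alpha_1,\ldots,\alpha_r)$ with $[K(\alpha_i):K]\le d$, and Checcoli's theorem (cited in this very paper) shows that a finite subextension of $K^{(d)}/K$ need not itself be a compositum of extensions of bounded degree. The repair costs nothing: run your local argument on $N$ and its prime $\mathfrak{B}'$ above $\mathfrak{B}$; then $e(\mathfrak{B}/\mathfrak{p})$ divides $e(\mathfrak{B}'/\mathfrak{p})$ and $[M_{\mathfrak{B}}:K_{\mathfrak{p}}]\le[N_{\mathfrak{B}'}:K_{\mathfrak{p}}]$, so both conclusions descend to $M$. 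With that adjustment both halves go through: the tame structure theory $L\subset K_{\mathfrak{p}}^{\mathrm{ur}}(\pi^{1/e})$ gives the second claim, and Krasner finiteness of $K_{\mathfrak{p}}^{(d)}$, bounded local conductors at the finitely many admissible primes, and the finiteness of the ray class field give the first.

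The paper argues globally and group-theoretically instead. For the second claim it observes that tameness forces the inertia group $I(\mathfrak{B}/\mathfrak{p})$ to be cyclic, and that it sits inside $\Gal(M/K)$, whose exponent divides $d!$. For the first claim it uses abelianness much earlier and more heavily: $\Gal(M/K)$, abelian of exponent dividing $d!$, is a product of cyclic groups of order at most $d!$, so $M$ is a compositum of extensions $E/K$ of degree at most $d!$; each such $E$ has discriminant supported on primes $\le p_M$ with bounded exponents, and Hermite's theorem leaves only finitely many possible $E$, hence finitely many $M$ with $p_M\le C$. What each approach buys: the paper's proof avoids class field theory entirely and isolates exactly the decomposition step that breaks down for $K^{(d)}$ when $d\ge3$; your proof of the second claim is stronger than the paper's (no abelian or even Galois hypothesis, it applies verbatim to all of $K^{(d)}$, and it yields the sharper divisor $\mathrm{lcm}(1,\ldots,d)$), and your first claim makes transparent that abelianness is needed only at the final class-field-theoretic step, at the price of invoking Krasner's lemma, conductor--discriminant, and ray class fields.
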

\begin{proof}
We follow  Bombieri and Zannier's argument from \cite{BoZa}.
Let $M$ be a number field with $K\subset M\subset K^{(d)}_{ab}$. Then $M/K$ is an abelian extension of exponent\footnote{If $K_1, K_2$ are two finite Galois extensions of $K$ then $\sigma \to (\sigma_{|_{K_1}},\sigma_{|_{K_2}})$ induces an injective group homomorphism from $\Gal(K_1K_2/K)$ to $\Gal(K_1/K)\times\Gal(K_2/K)$. 
This implies that for each Galois extension $M/K$ with $M\subset K^{(d)}$ the Galois group $\Gal(M/K)$ has exponent dividing $d!$, and no prime $p>d$ divides the order of $\Gal(M/K)$.} 
dividing  $d!$, and thus $\Gal(M/K)$ is isomorphic to a direct product $A_1\times \cdots\times A_r$ of cyclic groups of order dividing $d!$. 
Therefore $M$ can be written as composite field of extensions $E$ of $K$ of degree at most $d!$. Indeed, let $\varphi:A_1\times \cdots\times A_r\to \Gal(M/K)$ be an isomorphism and let $E_i=\Fix(\varphi (H_i))$ where $H_i$ is
the subgroup that picks the trivial group in the $i$-th component and the full $A_j$ in all other components. By the Galois-correspondence we have $\Gal(M/E_1\cdots E_r)=\cap_i \varphi(H_i)=\varphi(\cap_i H_i)=\{id\}$. Hence, $M=E_1\cdots E_r$. Now the largest power of a prime dividing the discriminant of $E$ can be bounded solely in terms of $K$ and $d$ (cf. \cite[Theorem B.2.12]{BG}).
Thus, by Hermite's Theorem,   $p_M$,  the largest prime that ramifies in $M$,
tends to infinity as $M$ runs over all such intermediate fields $M$.

For the second claim note that the inertia group $I(\B/\p)$ is a subgroup of $\Gal(M/K)$, and so its order is not divisible by $p$, whenever $p>d$ is prime. Since the ramification index $e(\B/\p)$  is equal to the order of $I(\B/\p)$ it follows that $\p$ is tamely ramified in $M$. Hence (cf. \cite[B.2.18 (e)]{BG}), $I(\B/\p)$ is cyclic,
and thus $e(\B/\p)$ divides $d!$.
\end{proof}

%\begin{remark}
%The second claim of Lemma \ref{lem: basic} holds even when $K^{(d)}_{ab}$ is replaced by $K^{(d)}$. This follows from 
%the following strong version of Abhyankar's Lemma (cf. \cite[Theorem 2.1]{ChabertHalberstadt}).
%Let $K_1,K_2$ be two finite extensions of the number field $K$, let $L=K_1K_2$,
%let $\B$ be a prime ideal in $\Oseen_L$ over the rational prime $p$, let $\p=\B\cap K$, and let $\p_i=\B\cap K_i$ (for $i=1,2$).
%If $p$ does not divide $e(\p_2/\p)$, then  $e(\B/\p)$ is the least common multiple of $e(\p_1/\p)$ and $e(\p_2/\p)$.
%\end{remark}

%\cite[Th. 3.9.1]{Stichtenoth} for function fields 

%The second claim of Lemma \ref{lem: basic} holds for every prime $p$, even when $K^{(d)}_{ab}$ is replaced by $K^{(d)}$. This follows from Abhyankar's Lemma
%and the following  fact:  if $K_1,K_2$ are two finite extensions of the number field $K$ and $K_1/K$  is Galois, then $e(\B/\p)|e(\p_1/\p)e(\p_2/\p)$,
%where $\p$ and $\B$ are prime ideals in $K$ and $L=K_1K_2$ respectively, and $\p_i=\B\cap K_i$ (for $i=1,2$). 
%
%The latter can be seen as follows (cf. \cite[(2)]{ChabertHalberstadt}). Since $K_1/K$  is Galois also $L/K_2$  is Galois. 
%Restricting the elements of $\Gal(L/K_2)$ to $K_1$ defines an injective homomorphism $\phi: \Gal(L/K_2) \to \Gal(K_1/K)$.
%The image of the inertia group $I(\B/\p_2)$ lies in the inertia group $I(\p_1/\p)$, and thus, $e(\B/\p_2)|e(\p_1/\p)$. Multiplying both sides with $e(\p_2/\p)$
%yields the claim.

Now let us show that Theorem \ref{prop:1} together with Lemma \ref{lem: basic} implies Theorem \ref{thm: 1}. 
\begin{proof}[Proof of Theorem \ref{thm: 1}]
Let $M$ be a number field with $K\subset M\subset K^{(d)}_{ab}$. Then $M$ is  abelian over $K$.  
By Lemma \ref{lem: basic} $p_M>|D_{K/\IQ}|+d$  for all but finitely many $M$, and thus we can assume $p_M$ is unramified in $K$ and $p_M>d$.  Therefore, one of the prime ideal divisors of $p_M\Oseen_K$, say $\p$, must ramify in $M$.
%Let   $M$ be a  number field in $K^{(d)}_{ab}$. Then $M$ is  abelian over $K$. Let $p=p_M$ be the largest prime that ramifies in $M$.
%Ignoring finitely many $M$ we can assume that $p_M$ is unramified in $K$. 
Let $\p\Oseen_M=(\B_1\cdots\B_g)^e$
be the  decomposition in $\Oseen_M$ with $\B_1,\ldots, \B_g$ distinct prime ideals.
%We  consider  the inertia groups ${I_{\B_i}}$  attached to the  prime ideal $\B_i$.
Let $T_i$ be the fixed field for the inertia group $I(\B_i/\p)$, and
%The prime ideal $\p$ is unramified in $T_i$
%and $\p_i=\B_i\cap T_i$ is totally ramified in $M$. It follows that $\p_i^{e-1}|D_{M/T_i}$, and that $f(\B_i/\p_i)=1$ so that  $f(\B_i/\p)=f(\p_i/\p)$ for  the residue degree.
let $\p_i=\B_i\cap T_i$. Then $e(\p_i/\p)=1$, and $e=e(\B_i/\p_i)=[M:T]$.
It follows that $\p_i^{e-1}|D_{M/T_i}$, and that $f(\B_i/\p)=f(\p_i/\p)$ for  the residue degree.
Now the $I(\B_i/\p)$ are conjugated to each other and since $M/K$ is abelian they are all  equal, and thus all the fixed fields $T_i$ are equal to $T$, say.
Therefore $(\p_1\cdots \p_g)^{e-1}|D_{M/T}$, which implies $p_M^{[M:K]/2}\leq N_{T/\IQ}(D_{M/T})$.
%Now $[M:T]=e$, and thus 
Choosing $F=T$ in (\ref{def: ga}) 
shows that $\ga(M/K)\geq p_M^{1/(2e[K:\IQ])}$ which, by Lemma \ref{lem: basic}, tends to infinity as $M$ runs over all number fields with $K\subset M\subset K^{(d)}_{ab}$. Applying Theorem \ref{prop:1} completes the proof.\\
\end{proof}

\begin{remark}
Alternatively, one can use the decomposition of $M$ as compositum of extensions $E$ of $K$ of degree at most $d!$ as in the proof of Lemma \ref{lem: basic}. 
Hence, $\p$ ramifies in at least one of the  fields $E$, and thus  $\p|D_{E/K}$. 
Since $D_{M/K}=D_{E/K}^{[M:E]}N_{E/K}(D_{M/E})$ we conclude  $\p^{[M:E]}|D_{M/K}$.
Since  $\p$ is unramified in $T$, and  $D_{M/K}=D_{T/K}^{[M:T]}N_{T/K}(D_{M/T})$
we get $\p^{[M:E]}| N_{T/K}(D_{M/T})$. Taking norms and using $[M:T]=e$ gives
$\ga(M/K)\geq  p^{\frac{1}{[K:\IQ]d!e}}$.
%\begin{alignat}1\label{ineq:3}
%\ga(M/K)\geq \left(N_{T/\IQ}(D_{M/T})\right)^{\frac{1}{[M:\IQ][M:T]}}\geq p_M^{\frac{[M:E]}{[M:\IQ][M:T]}}\geq p_M^{\frac{1}{[K:\IQ]d!e}}.
%\end{alignat}
\end{remark}

To compare we now discuss Bombieri and Zannier's original proof of  Theorem \ref{thm: 1}. 
%Recall that we consider the special case  $\IQ^{(d)}_{ab}$, although this is only a minor simplification.
We leave out some of the more technical details but the basic argument is as follows. 
We mostly use the notation of \cite{BoZa} 
(see also \cite[Theorem 4.5.4]{BG} for a slightly more detailed approach).

\begin{proof}[Proof of Theorem \ref{thm: 1} (after Bombieri and Zannier)]
By enlarging $K$ we can assume $K$ contains  a primitive $d!$-th root of unity.
Let $\alpha\in K^{(d)}_{ab}$ be of height at most $X$, and set  $L=K(\alpha)$. 
Then $L$ is abelian over $K$.
%, and the exponent of $\Gal(L/K)$ divides $d!$. 
Let $p>d$ be a prime unramified in $K$, let $v$ be a place in $K$ above $p$, and write $e$ for  the ramification index of $v$ in $L$.
Then $e$ divides $d!$ by Lemma \ref{lem: basic}.

%Since $p>d$ it follows that $p$ does not divide $\Gal(L/K)$, and thus each place in $L$ above $v$  is tamely ramified\footnote{Note that ``tamely ramified'' includes ``unramified''.},
%with cyclic inertia group of order $e$. This shows that $e$ divides $d!$.

Now set $\theta=p^{1/e}$. Then $L(\theta)$ is again an abelian extension of $K$.
Since $x^e-p\in K[x]$ is a $v$-Eisenstein polynomial it follows that $[K(\theta):K]=e$ and $v$ is totally ramified in $K(\theta)$.
By Abhyankar's Lemma  the ramification indices of the places in $L(\theta)$ above $v$  are again $e$.
As $\Gal(L(\theta)/K)$
is abelian the inertia groups of each place in $L(\theta)$ above $v$ are equal, and of size $e$. Let $U$ be their common fixed field,
so that $[L(\theta):U]=e$. Now $v$ is unramified in $U$ and totally ramified in $K(\theta)$ and thus $[U(\theta):U]=e$.
Hence, 
$L(\theta)=U(\theta)$, and thus 
$$\alpha=\beta_0+\beta_1 \theta+ \cdots + \beta_{e-1}\theta^{e-1}$$
for certain coefficients $\beta_i\in U$.
%Now computing the trace from $U(\theta)$ to $U$ of $\alpha\theta^{-j}$, and using standard height inequalities, allows one to get an upper bound for the height
%of $\gamma_j:=\beta_jp^{j/e}$ in terms of $d$ and $X$. 
Now  the trace from $U(\theta)$ to $U$ of $\alpha\theta^{-j}$ is the sum of the conjugates of $\alpha\theta^{-j}$ over $U$. 
It is not hard to see that this trace is also just $e\beta_j$.
Combining both, and using standard height inequalities, gives an upper bound for the height
of $\gamma_j:=\beta_jp^{j/e}$ in terms of $d$ and $X$. 

Let us now assume that  $1\leq j\leq e-1$ and $b_j\neq 0$. Let $u$ be a place in $U(\theta)$ above $v$, and let  $\q=\q_u$ be the corresponding prime ideal in
the ring of integers of  $U(\theta)$. Then the exact order to which $\q$ divides $b_j$ is a (possibly negative) multiple of $e$, whereas the exact order to which it divides $p^{j/e}$ is
$j$. This implies that the exact order to which $\q$ divides $\gamma_j$ is non-zero.
Using this fact for all  places in $U(\theta)$ above $v$ yields 
a lower bound for the height of $\gamma_j$ of the form $H(\gamma_j)\geq p^{1/(2e[K:\IQ])}$, provided $1\leq j\leq e-1$ and $b_j\neq 0$. 

Combining the upper and lower bounds for the height of $\gamma_j$, and using that  $e$ is  bounded in terms of  $d$, gives an upper bound $B(K,d,X)$ for $p$ in terms
of $d$, $[K:\IQ]$, and $X$, whenever one among $b_1,\ldots,b_{e-1}$ is non-zero. 

This means that for each place $v$ of $K$ lying above a prime $p>B(K,d,X)$ we have  $\alpha\in U$, and $v$ is unramified in $U$.
Therefore, $K(\alpha)$ is unramified at each prime $p$ whenever $p>B(K,d,X)$ (assuming, as we can, $B(K,d,X)>|D_{K/\IQ}|$).
But, by Lemma \ref{lem: basic}, the largest prime $p$ ramifying in $K(\alpha)$ tends to infinity when $K(\alpha)$ runs over an infinite set of subfields of  $K^{(d)}_{ab}$.
Hence, we conclude that $\alpha$ lies in a number field, depending only on $K$, $d$ and $X$,
and thus, by Northcott's Theorem, there are only finitely many possibilities for $\alpha$. 
This completes the proof.

%But, by Lemma \ref{lem: basic}, the largest prime $p_M$ ramifying in $M$ tends to infinity as $M$ runs over all number fields in $K\subset M\subset K^{(d)}_{ab}$.
%Hence, we conclude that $\alpha$ lies in a number field, depending only on $K$, $d$ and $X$,
%and thus, by Northcott's Theorem, there are only finitely many possibilities for $\alpha$. 
%This completes the proof.
\end{proof}

%\begin{remark}

The first proof of Theorem \ref{thm: 1}  (using Theorem \ref{prop:1}) only requires  $e$ to be  bounded in terms of $d$, whereas
the second proof above requires the ramification index $e$ to divide $d!$ to conclude that $L(\theta)/K$ is Galois (and abelian).

The fact  that $K^{(d)}_{ab}/K$ is abelian is used in both proofs  in three different ways, namely to ensure that:

\begin{itemize}
\item[(i)] $p_M$ in Lemma \ref{lem: basic} tends to infinity,
\item[(ii)] $M/K$ is Galois for every number field  $K\subset M \subset K^{(d)}_{ab}$,
\item[(iii)] the inertia groups $I(\B/\p)$ for the different prime ideals $\B\subset \Oseen_M$ above $\p\subset \Oseen_K$ are all equal.
\end{itemize}

The second claim of Lemma \ref{lem: basic} remains true for $K^{(d)}/K$ (replace $M$ by its Galois closure over $K$ in the proof) but
the proof of the first claim falls apart for $K^{(d)}$ when $d\geq 3$. This is because not all finite extensions of $K$  in $K^{(d)}$ can be written as 
compositum of number fields of uniformly bounded degree over $K$ as was shown by Checcoli \cite[Theorem 1]{Checcoli}, at least if $d\geq 27$.  
Gal and Grizzard \cite[Corollary 1.2]{GalGrizzard} showed that  $d\geq 3$ suffices. 

However, they also showed \cite[Theorem 1.3]{GalGrizzard} that every number field in $K^{(3)}$ that is Galois over $K$ can be written as a compositum of extensions
of $K$ of degree at most $3$. This means that if we only consider $\alpha$ in the set $K^{(3)}_G=\{\alpha\in K^{(3)}; K(\alpha)/K \text{ is Galois}\}$ then (i) and (ii) are automatically satisfied
for each $M=K(\alpha)$. This raises the question whether $K^{(3)}_G$ has the Northcott property. An affirmative answer would be a significant extension of the case $d=3$
in Theorem \ref{thm: 1}.

\bibliographystyle{amsplain}
\bibliography{literature}

\end{document}